\documentclass[12pt]{amsart}
\usepackage[matrix,arrow]{xy}
\usepackage{amssymb}
\usepackage{amsmath}
\usepackage{amsfonts}
\usepackage{epsfig}
\usepackage{color}
\usepackage{epstopdf}
\usepackage{graphicx}
\usepackage{amsthm}
\usepackage{enumerate}
\usepackage[mathscr]{eucal}
\usepackage{verbatim}

\usepackage[bookmarks=true,hyperindex,pdftex,colorlinks,citecolor=blue,
linkcolor=blue,urlcolor=blue]{hyperref}
\usepackage{tikz}
\usetikzlibrary{matrix,arrows.meta}


\headheight=8pt     \topmargin=20pt \textheight=624pt
\textwidth=432pt \oddsidemargin=20pt \evensidemargin=20pt


\setcounter{MaxMatrixCols}{30} 
\theoremstyle{plain}
\newtheorem{theorem}{Theorem}[section]
\newtheorem{cor}[theorem]{Corollary}
\newtheorem{prop}[theorem]{Proposition}
\newtheorem{lemma}[theorem]{Lemma}

\theoremstyle{definition}

\newtheorem{example}[theorem]{Example}

\newtheorem{question}[theorem]{Question}

\newtheorem{definition}[theorem]{Definition}



\newcommand{\R}{\mathbb{R}}
\newcommand{\N}{\mathbb{N}}

\newcommand{\Lin}{\mathcal{L}}

\newcommand{\F}{\mathcal{F}}
\newcommand{\eps}{\varepsilon}

\DeclareMathOperator{\spann}{span}

\DeclareMathOperator{\QNA}{QNA}

\DeclareMathOperator{\SA}{SNA}

\DeclareMathOperator{\A}{LipA}

\newcommand{\Lip}{{\mathrm{Lip}}_0}

\renewcommand{\subset}{\subseteq}



\title[Norm attaining Lipschitz maps toward vectors]
{Norm attaining Lipschitz maps toward vectors}

\author[G.~Choi]{Geunsu Choi}
\address[G.~Choi]{Department of Mathematics Education, Dongguk University, Seoul 04620, Republic of Korea}
\email{\texttt{chlrmstn90@gmail.com}}

\thanks{The author was supported by Basic Science Research Program through the National Research Foundation of
Korea(NRF) funded by the Ministry of Education, Science and Technology [NRF-2020R1A2C1A01010377].} 

\keywords{Lipschitz map, Norm attainment, Metric space, Lipschitz-free space}
\subjclass[2010]{Primary: 46B04;  Secondary: 26A16, 46B20, 54E50}

\date{\today}                                           


\begin{document}

\begin{abstract}
We extend the recent result of G. Godefroy which concerns the existence of non-norm attaining Lipschitz maps in order to characterize the norm attainment toward vectors for Lipschitz maps in the general setting of underlying space. The main theorem of the present paper states that the existence of non-norm attaining Lipschitz maps toward vectors on a large class of metric spaces is characterized by the finite-dimensionality of range space. As an extension of his counterexample, some denseness results on norm attaining Lipschitz maps toward vectors are also shown.
\end{abstract}

\maketitle

\section{Introduction}

The present paper is basically motivated by the conventions of \cite{G2}. The norm attainment toward vectors is one of several notions of norm attaining Lipschitz maps to resolve the failure of denseness on arbitrary Banach space due to the result of \cite{KMS}. Without a doubt, the most natural way of defining the norm attainment for Lipschitz maps would be the case when there exists a specific pair of distinct points $(p,q)$ such that $\left\|f(p)-f(q)\right\|/d(p,q) = \|f\|$ for a given Lipschitz map $f$, provided the distance function $d$ and the Lipschitz norm $\|f\|$ of $f$. This notion, which we usually call by the \emph{strong norm attainment} for Lipschitz maps, naturally extends the original concept of norm attainment for bounded linear operators. However, as we have pointed out, even $\R$ fails for the set of strongly norm attaining Lipschitz maps to be dense in the space $\Lip(\R,\R)$ of all Lipschitz maps from $\R$ into itself vanishing at 0. As a matter of fact, there have been many approach to fix this drawback. For instance in \cite{G2, KMS}, they defined somewhat weaker notions of norm attaining Lipschitz maps, and in \cite{CCGMR} they generalized the domain space of strong norm attainment from a Banach space to a metric space in order to give positive answers on the denseness. A complete classification of those norm attainment notions can be found in \cite{CCM}. Among those, our main definition inspired by \cite{G2} is as follows. All metric spaces $M$ are assumed to be complete and pointed, and $Y$ always denotes a real Banach space.

\begin{definition}
A Lipschitz map $f \in \Lip(M,Y)$ is said to \emph{attain its norm toward} $y \in Y$ if there exists a sequence of distinct pairs $\{(p_n,q_n)\} \subset M \times M$ such that $\lim_n [f(p_n)-f(q_n)]/d(p_n,q_n) = y$ with $\|y\|=\|f\|$. Moreover, if such $y$ exists for $f$, we say $f \in \A(M,Y)$.
\end{definition}

The only difference above from the original statement in \cite{G2} is that we have extended the underlying space from a Banach space to a general metric space by following the leading motivation of \cite{CCGMR,G1} for strong norm attainment. It is evident that every Lipschitz map which attains its norm in the strong sense attains its norm toward a vector, and we have no interest when $Y$ is finite-dimensional owing to the compactness. The main result in \cite{G2} concerning the existence of non-norm attaining Lipschitz maps is stated below with a reformulation.

\begin{theorem}\cite{G2}\label{theorem:LipA-dense-fail}
There are Banach spaces $X$ and $Y$ Lipschitz isomorphic such that no Lipschitz isomorphism from $X$ onto $Y$ attains its norm toward any vectors.
\end{theorem}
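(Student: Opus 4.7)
The strategy is to build a specific pair of Banach spaces $X$ and $Y$, linearly (hence Lipschitz) isomorphic, and a reduction from Lipschitz norm attainment toward a vector to the corresponding linear phenomenon; the pair will then be chosen so that no linear isomorphism attains its norm.

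For the construction, take $X$ and $Y$ to be two equivalent renormings of a separable reflexive Banach space $E$ (for instance $E=\ell_2$), engineered so that every bounded linear isomorphism $T\colon X\to Y$ satisfies $\|Tx\|_Y<\|T\|$ for all $x\in S_X$. A candidate definition sets each norm as a weighted supremum over a countable dense family of functionals with strictly decreasing weights, so that the supremum defining $\|T\|$ is only attained asymptotically along an escaping sequence; care is needed to check that the obstruction survives the full family of invertible linear maps from $X$ to $Y$, not merely the identity.

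Now suppose for contradiction that a Lipschitz isomorphism $f\in\Lip(X,Y)$ attains its norm toward $y\in Y$ with $\|y\|=\|f\|$, witnessed by distinct pairs $(p_n,q_n)$ such that $v_n:=[f(p_n)-f(q_n)]/d(p_n,q_n)\to y$. Set $u_n:=(p_n-q_n)/d(p_n,q_n)\in S_X$ and fix $y^*\in S_{Y^*}$ with $\langle y^*,y\rangle=\|f\|$. Since $Y$ is reflexive, an Aronszajn-type theorem yields G\^ateaux differentiability of $f$ a.e.\ on each segment $[q_n,p_n]$; set $\phi_n(t):=\langle y^*,f'(q_n+t(p_n-q_n))u_n\rangle$. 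Then $|\phi_n(t)|\leq\|f\|$ a.e.\ and $\int_0^1\phi_n(t)\,dt=\langle y^*,v_n\rangle\to\|f\|$, so a Chebyshev-type argument produces $z_n\in[q_n,p_n]$ with both $\|f'(z_n)\|\to\|f\|$ and $\langle y^*,f'(z_n)u_n\rangle\to\|f\|$; strict convexity of $Y$ at $y$ (built into the renorming) then forces $f'(z_n)u_n\to y$. A weak-operator cluster point $T$ of $\{f'(z_n)\}$ is a bounded linear operator with $\|T\|=\|f\|$, and reflexivity of $X$ produces a weak limit $u$ of $(u_n)$ with $\|u\|=1$ and $Tu=y$, so $T$ attains its norm on $S_X$. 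A parallel extraction from $f^{-1}$ supplies a weak-operator limit that serves as an inverse for $T$, making $T$ a linear isomorphism attaining its norm, which contradicts the hypothesis on $(X,Y)$.

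The main obstacle is the simultaneous extraction of $z_n$ satisfying \emph{both} $\|f'(z_n)\|\to\|f\|$ \emph{and} $f'(z_n)u_n\to y$: the integral mean controls only the scalar quantity $\phi_n$, so separating the operator norm from the direction requires strict convexity of $Y$ at $y$, a feature that must be compatible with the global obstruction that no linear isomorphism attains its norm. A secondary but nontrivial difficulty is ensuring that the limit operator $T$ is itself an isomorphism; running the argument symmetrically for $f^{-1}$ and gluing the two limits should produce a compatible pair $(T,T^{-1})$.
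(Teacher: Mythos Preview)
The paper does not give its own proof of this statement: it is quoted verbatim from \cite{G2} and used only as motivation. So there is nothing in the present paper to compare your argument against; the relevant comparison is with Godefroy's original proof, which proceeds by choosing $X$ and $Y$ to be separable Lipschitz-isomorphic spaces for which a differentiation/linearization argument forces the existence of a linear map with a property that is known to fail for that specific pair.

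Your proposal, as written, has genuine gaps at each of its two halves. For the construction, you only \emph{describe} what the pair $(X,Y)$ should do (``every bounded linear isomorphism $T$ satisfies $\|Tx\|_Y<\|T\|$ on $S_X$'') and gesture at weighted suprema, but you do not actually build such a pair, and you simultaneously require $Y$ to be strictly convex at the target vector $y$; it is not at all clear these constraints are compatible, and this is exactly the part where the content lies. For the reduction, two steps fail as stated. First, ``reflexivity of $X$ produces a weak limit $u$ of $(u_n)$ with $\|u\|=1$'' is false in general: weak limits drop norm, and nothing in your setup prevents $\|u\|<1$, which would destroy the conclusion that $T$ attains its norm. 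Second, even granting a weak-operator cluster point $T$ of $\{f'(z_n)\}$ and norm convergence $f'(z_n)u_n\to y$, you cannot conclude $Tu=y$: you are passing to the limit in a product where one factor converges only weak-operator and the other only weakly, and such products need not converge. The ``parallel extraction from $f^{-1}$'' to make $T$ an isomorphism is a further hand-wave: the two limiting procedures are taken along unrelated subsequences and at unrelated base points, so there is no reason the resulting operators should be mutual inverses. As it stands, the proposal is a plausible-sounding outline whose load-bearing steps are either missing (the construction) or incorrect (the weak-limit and product-limit claims).
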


A major significance of the preceding result is that even for the weakest concept of Lipschitz norm attainment, it may have a wide range of non-norm attaining ones. Later in \cite{CCM} the existence of non-norm attaining Lipschitz map toward vectors was generalized to an arbitrary case of domain and range spaces, and our primary aim is to extend these results in view of a generalized domain space. Meanwhile, it was also discovered that Theorem \ref{theorem:LipA-dense-fail} can be extended further to the denseness problem, which we refer to \cite[Example 3.6]{CCM}. Hence the denseness of norm attainment toward vectors will be our secondary interest on the topic.

\ \

Let us bring back to the detailed explanation of backgrounds. We use the standard notion of functional analysis throughout the article. In particular, for a Banach space $X$, $B_X$ and $S_X$ denote the unit ball and the unit sphere of $X$, respectively. $X^*$ stands for the topological dual space of $X$. We write by $\Lin(X,Y)$ the space of all bounded linear operators from $X$ into another Banach space $Y$. The operator norm of a bounded linear operator will be abbreviated by the Lipschitz norm $\|\cdot\|$ since its Lipschitz norm coincides with the operator norm.

As understanding the context requires the concept so-called Lipschitz-free space, we briefly introduce the notions and conventions here. We refer to \cite{G1} for who interested in this topic and one may find the contents of strong norm attainment for Lipschitz maps as well. Let $\delta: M \to \Lip(M,\R)^*$ be the point evaluation map, indeed distance-preserving, given by $\delta(x)(f) := f(x)$. We define the notion of \emph{Lipschitz-free space} over $M$ by
$$
\F(M) := \overline{\spann}\{\delta(x): x \in M\} \subseteq \Lip(M,\R)^*.
$$
It is well known that there is an isometric isomorphism between the space $\Lip(M,Y)$ and the space $\Lin(\F(M),Y)$, precisely by the element $f$ and $T_f$ with the relation $T_f(\delta(x)) = f(x)$, and the correspondence of $T_f$ will be used without any mentioning. We write the set of \emph{molecules} of $M$ by
$$
\operatorname{Mol}(M) := \left\{m_{p,q} := \frac{\delta(p)-\delta(q)}{d(p,q)} : (p,q) \in M \times M,\, p \neq q \right\} \subseteq S_{\F(M)},
$$
and it will be used freely without any proof that $\overline{\operatorname{co}}(\operatorname{Mol}(M)) = B_{\F(M)}$ where $\overline{\operatorname{co}}$ denotes the closed convex hull. 

Since a metric space is not a normed space in general, normally it is not natural to consider a summation of spaces. Nevertheless, we are still able to consider the ``direct sum'' of the family of metric spaces to deal with the stability result which was first considered in \cite{K}. Throughout the article, this concept will be used very frequently. Let $\{(M_i,d_i)\}_{i \in I}$ be a family of metric spaces. We write the \emph{metric sum} $\coprod_{i \in I} M_i$ to denote the disjoint union of family of metric spaces, by identifying the origins and endowing with the metric $d : M \times M \to \R$ given by
\begin{displaymath}
d(p,q):=\left\{\begin{array}{@{}cl}
\displaystyle \phantom{.} d_i(p,q) & \text{if } p,q \in M_i \\\\
\displaystyle \phantom{.} d_i(p,0) + d_j(0,q) & \text{if } p \in M_i, q \in M_j \text{ with } i \neq j.
\end{array} \right.
\end{displaymath}
It is shown in \cite{K} that there is an isometric isomorphism between the spaces
$$
\F \biggl( \coprod_{i \in I} M_i \biggr) = \biggl[ \bigoplus_{i \in I} \F(M_i) \biggr]_{\ell_1}.
$$

Finally, we shall provide another key notion introduced in \cite{CCJM}. If $X$ and $Y$ are both Banach spaces, then a bounded linear operator $T \in \Lin(X,Y)$ is said to \emph{quasi attain its norm} if there exist a sequence $\{x_n\} \subset B_X$ and a vector $y_0 \in Y$ such that $Tx_n$ converges to $y_0$ with $\|y_0\|=\|T\|$. We denote by $T \in \QNA(X,Y)$ if such sequence and vector exist. The concept of quasi norm attaining operator was a spin off of the norm attainment toward vectors for Lipschitz maps inspired by the intuition $\QNA(X,Y) = \A(X,Y) \cap \Lin(X,Y)$, and it was found in \cite{CCJM} that the quasi norm attainment completely characterizes the approximability of operators in terms of the Radon-Nikod\'ym property (RNP in short). In the next sections, we will find out how they are connected closely to each other.

\ \

The main purpose of this article is as follows. In Section \ref{section:char}, we give results on the characterization of norm attainment toward vectors for Lipschitz maps, in terms of other known conditions. We examine the equivalence associated to the quasi norm attaining operators as explained above. Mainly in the article, on the large class of metric spaces (including arbitrary infinite subsets of finite-dimensional Banach spaces) which we introduce later, we show that the finite-dimensionality of the range space is a sufficient and necessary condition for the set $\A$ to be identified by the whole space. This result allows us to produce arbitrarily many non-norm attaining Lipschitz maps toward vectors which maps to an infinite-dimensional Banach space. In Section \ref{section:dense}, we classify and list up the previously known results to obtain new consequences on the denseness of norm attaining Lipschitz maps toward vectors. This work extends Godefroy's first negative result as we know that it becomes also the first example which fails the denseness. Finally, in order to produce new examples including a non-Banach space one, we prove some stability results on the denseness for Lipschitz norm attainments.

\
\

\section{Characterization of norm attaining Lipschitz maps toward vectors}\label{section:char}

In this section, we look into the set of norm attaining Lipschitz maps toward vectors, in order to give an answer to how many there are non-norm attaining Lipschitz maps toward vectors in the setting of metric spaces, generalizing the idea of \cite{G2}. To begin with, we start with a very basic but fundamental result relating the notion of two sets $\A$ and $\QNA$. The proof is quite straightforward, but we give here for completeness.

\begin{lemma}\label{lemma:QNA}
If $f \in \A(M,Y)$, then $T_f \in \QNA(\F(M),Y)$.
\end{lemma}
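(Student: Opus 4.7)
The plan is to exhibit explicit witnesses for the quasi norm attainment of $T_f$ coming directly from the pairs $(p_n, q_n)$ that witness $f \in \A(M,Y)$. The natural candidates are the molecules $m_{p_n, q_n} = [\delta(p_n) - \delta(q_n)]/d(p_n, q_n)$.

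First, I would note that $m_{p_n,q_n} \in \operatorname{Mol}(M) \subseteq S_{\F(M)} \subseteq B_{\F(M)}$, so this is a legitimate sequence in the closed unit ball. Next, using the defining relation $T_f(\delta(x)) = f(x)$ and linearity, I would compute
\[
T_f(m_{p_n, q_n}) = \frac{T_f(\delta(p_n)) - T_f(\delta(q_n))}{d(p_n, q_n)} = \frac{f(p_n) - f(q_n)}{d(p_n, q_n)},
\]
which by hypothesis converges to $y$ as $n \to \infty$.

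It then remains to verify $\|y\| = \|T_f\|$, which is immediate because $\|T_f\| = \|f\|$ under the isometric isomorphism between $\Lip(M,Y)$ and $\Lin(\F(M), Y)$ mentioned in the introduction, and $\|y\| = \|f\|$ by the definition of $f \in \A(M,Y)$. Setting $y_0 := y$ and $x_n := m_{p_n, q_n}$ therefore supplies exactly the data required for $T_f \in \QNA(\F(M), Y)$.

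There is essentially no obstacle here; the proof is a direct translation through the molecule map. The only point worth being careful about is that the hypothesis provides distinct pairs so that the molecules are well defined, and the convergence of the image quotients is literally the convergence of $T_f(x_n)$ by linearity — no further analysis (e.g.\ on $\{x_n\}$ itself) is needed, since the definition of $\QNA$ only asks for convergence of the image sequence.
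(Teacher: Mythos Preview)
Your proposal is correct and follows essentially the same approach as the paper: take the molecules $m_{p_n,q_n}\in\Mol(M)\subseteq B_{\F(M)}$, observe $T_f(m_{p_n,q_n})=[f(p_n)-f(q_n)]/d(p_n,q_n)\to y$, and conclude using $\|y\|=\|f\|=\|T_f\|$. The paper's proof is the same argument stated more tersely.
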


\begin{proof}
Let $\{(p_n,q_n)\}$ be a sequence of distinct pairs in $M \times M$ and $y_0 \in Y$ be a vector with $\|y_0\|=\|f\|$ such that $[f(p_n)-f(q_n)]/d(p_n,q_n)$ converges to $y_0$. Then, it is routine to see that $T_f(m_{p_n,q_n})$ converges to $y_0$ and $m_{p_n,q_n} \in \operatorname{Mol}(M) \subseteq B_{\F(M)}$ for each $n \in \N$, which finishes the proof.
\end{proof}

As a consequence, some of the answer can be given right away as there are already results failing the equality about quasi norm attaining operators in \cite{CCJM}.

\begin{example}
We have the following examples.
\begin{enumerate}
\item[\textup{(a)}] Let $M$ be an infinite metric space. Then, $\A(M,c_0) \subsetneqq \Lip(M,c_0)$.
\item[\textup{(b)}] Let $M$ be either an infinite closed subset of $\R$ with measure 0 or the pointed natural number set $\N \cup \{0\}$. Then, $\A(M,Y) \subsetneqq \Lip(M,Y)$ for every infinite-dimensional Banach space $Y$.
\end{enumerate}
\end{example}

\begin{proof}
(a). It follows from $\F(M)$ is infinite-dimensional and that $\QNA(\F(M),c_0) \subsetneqq \Lin(\F(M),c_0)$ by \cite[Proposition 4.3]{CCJM}.

(b). It is a direct consequence of that $\F(M)$ is isometric to $\ell_1$ (see \cite{CCGMR}) and $\QNA(\ell_1,Y) \subsetneqq \Lin(\ell_1,Y)$ for every infinite-dimensional Banach space by \cite[Proposition 4.4]{CCJM}.
\end{proof}

A natural question that can be raised would be whether the converse of Lemma \ref{lemma:QNA} holds or not. We do not know the complete answer yet, but at least we can give a partial answer provided some additional condition on the range space.

\begin{question}\label{question:A-QNA}
Is there an operator $T_f \in \QNA(\F(M),Y)$ such that $f \notin \A(M,Y)$?
\end{question}

For a possible partial answer to Question \ref{question:A-QNA}, we first introduce the following convincing result. Recall that a point $x \in C$ is a \emph{denting point} in a bounded closed convex set $C$ if $x \notin \overline{\operatorname{co}}(C \setminus B(x,\eps))$ for all $\eps>0$ where $B(x,\eps)$ is the closed ball of radius $\eps$ centred at $x$. We shall denote by $\operatorname{dent}(C)$ the set of all denting points in $C$.

\begin{prop}
Let $M$ be a metric space and $Y$ be a Banach space. If $T_f \in \QNA(\F(M),Y)$ quasi attains its norm toward a vector $y \in \operatorname{dent}(B_Y)$, then $f \in \A(M,Y)$.
\end{prop}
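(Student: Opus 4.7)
The plan is to lift the quasi-attainment of the norm by $T_f$ to a sequence of molecules, and then use the denting hypothesis to collapse convex combinations back to individual molecules. Normalize so that $\|T_f\|=1$; then $\|y\|=1$ and $y\in S_Y\cap\operatorname{dent}(B_Y)$, and there exists $\{x_n\}\subset B_{\F(M)}$ with $T_f(x_n)\to y$. Because $\overline{\co}(\Mol(M))=B_{\F(M)}$, each $x_n$ can be approximated in norm by a finite convex combination $z_n=\sum_k \lambda^{(n)}_k m^{(n)}_k$ of molecules, and by continuity of $T_f$ one can arrange $T_f(z_n)\to y$ as well, yielding
$$y\in\overline{\co}\bigl(T_f(\Mol(M))\bigr).$$

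The key step is to upgrade this to $y\in\overline{T_f(\Mol(M))}$. Suppose for contradiction that $\dist(y,T_f(\Mol(M)))\geq\eps$ for some $\eps>0$. Since $T_f(\Mol(M))\subset B_Y$, it follows that $T_f(\Mol(M))\subset B_Y\setminus B(y,\eps)$, and hence
$$y\in\overline{\co}\bigl(T_f(\Mol(M))\bigr)\subset\overline{\co}\bigl(B_Y\setminus B(y,\eps)\bigr),$$
contradicting the denting property of $y$. Therefore one can pick molecules $m_{p_n,q_n}$ with $T_f(m_{p_n,q_n})\to y$; that is,
$$\frac{f(p_n)-f(q_n)}{d(p_n,q_n)}\longrightarrow y,\qquad \|y\|=\|f\|,$$
which is exactly $f\in\A(M,Y)$.

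The main (essentially only) obstacle is the denting step, but it is really just a direct application of the definition once one notices that $T_f$ maps molecules into $B_Y$. A minor technicality is the \emph{distinct pairs} condition in the definition of $\A(M,Y)$: this is automatic in that each molecule arises from a pair $(p_n,q_n)$ with $p_n\neq q_n$, and after passing to a subsequence one may further arrange that the pairs are not all identical (otherwise $y\in T_f(\Mol(M))$, whence the conclusion is immediate).
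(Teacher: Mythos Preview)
Your proof is correct and follows essentially the same route as the paper's: both pass from $T_f(\mu_n)\to y$ to convex combinations of images of molecules and then invoke the denting hypothesis to contradict $y\in\overline{\co}(B_Y\setminus B(y,\eps))$. Your formulation is in fact a bit cleaner, since you isolate the single abstract fact $y\in\overline{\co}(T_f(\Mol(M)))$ and apply the denting definition directly, whereas the paper threads the same idea through an explicit subsequence-selection argument on the individual molecules $v_{n,j}$.
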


\begin{proof}
Assume that $\|T_f\|=1$. Let $\{\mu_n\} \subseteq B_{\F(M)}$ be a sequence such that $T_f(\mu_n)$ converges to $y \in \operatorname{dent}(B_Y)$. Since $\overline{\text{co}}(\operatorname{Mol}(M)) = B_{\F(M)}$, we may write
$$
\mu_n = \sum_{j=1}^{m_n} a_{n,j} v_{n,j}
$$
where $v_{n,j} \in \operatorname{Mol}(M)$, $0 \leq a_{n,j} \leq 1$ and $\sum_{j=1}^{m_n} a_{n,j}=1$ for each $n \in \N$. We now claim that for each suitable $1\leq i_n \leq m_n$, the sequence $\{v_{n,i_n}\} \subseteq \operatorname{Mol}(M)$ is the desired element for the norm attainment. If not, we may assume that there exists $\eps>0$ such that for any choice of $\{i_n\}$ and $n \in \N$, $\|y-T_f(v_{n,i_n})\| > \eps$ by passing to a subsequence if necessary. Otherwise, by negation we have for any $\eps>0$, there exist suitable $n_0 \in \N$ and $1 \leq i_{n_0} \leq m_{n_0}$ such that $\|y-T_f(v_{n_0,i_{n_0}})\| \leq \eps$. This leads to that we may find appropriate $n_j$ and $1 \leq i_{n_j} \leq m_{n_j}$ for each $j \in \N$ such that $\|y-T_f(v_{n_j,i_{n_j}})\| \leq 1/2^j$, and hence $T_f(v_{n_j,i_{n_j}})$ converges to $y$. This shows that $f$ attains its norm toward $y$. 

Now by writing $v_{n,i_n}= m_{p_{n,i_n},q_{n,i_n}}$, we may find $\eps>0$ such that
\begin{equation}\label{equation:eps-distance}
\left\|y - \frac{f(p_{n,i_n})-f(q_{n,i_n})}{d(p_{n,i_n},q_{n,i_n})}\right\| > \eps
\end{equation}
for any $n \in \N$ and $1 \leq i_n \leq m_n$ by taking a suitable subsequence of $\{m_{p_{n,i_n},q_{n,i_n}}\}$ if necessary. Therefore, we obtain that the subsequence with an abuse of index
$$
T_f \left( \sum_{j=1}^{m_n} a_{n,j}v_{n,j} \right) = \sum_{j=1}^{m_n} a_{n,j} \frac{f(p_{n,j})-f(q_{n,j})}{d(p_{n,j},q_{n,j})}
$$
converges to $y$ while we have \eqref{equation:eps-distance} for all $n \in \N$ and $1 \leq j \leq m_n$, and this contradicts to the assumption that $y \in \operatorname{dent}(B_Y)$ since $y \in \overline{\operatorname{co}}(B_Y \setminus B(y,\eps))$.
\end{proof}

Note that it is not always true that $f \in \A(M,Y)$ attains its norm toward a denting point, as there are plenty of Banach spaces even without any extreme point in the unit ball. Still, there are many spaces which answers positively to Question \ref{question:A-QNA}. One may observe that the following corollary generalizes \cite[Proposition 3.7]{CCM} in further ways.

\begin{cor}\label{cor:str-cvx-KK}
Let $M$ be a metric space and $Y$ be a strictly convex Banach space with the Kadec-Klee property. Then, $f \in \A(M,Y)$ if and only if $T_f \in \QNA(\F(M),Y)$.
\end{cor}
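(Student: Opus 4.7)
The forward implication is already supplied by Lemma \ref{lemma:QNA}, so I concentrate on the converse. Suppose $T_f \in \QNA(\F(M),Y)$ quasi attains its norm toward some $y \in Y$ with $\|y\| = \|T_f\|$. Rescaling $f$ (which does not affect the property of attaining the norm toward a vector), I may assume $\|T_f\| = \|y\| = 1$. In view of the preceding Proposition, the whole problem then collapses to a purely geometric statement about $Y$: it suffices to verify that every point of $S_Y$ is a denting point of $B_Y$ whenever $Y$ is strictly convex and has the Kadec-Klee property.

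I would check this in two steps. First, strict convexity makes every sphere point an extreme point of $B_Y$; in particular $y$ is extreme. Second, $y$ is a point of weak-to-norm continuity of $B_Y$: given $\{y_n\} \subset B_Y$ with $y_n \to y$ weakly, weak lower semicontinuity of the norm yields $1 = \|y\| \leq \liminf_n \|y_n\| \leq 1$, so $\|y_n\| \to 1 = \|y\|$, and the Kadec-Klee property then delivers $\|y_n - y\| \to 0$. The classical theorem of Lin, Lin and Troyanski, which asserts that a point in a bounded closed convex set is denting if and only if it is simultaneously extreme and a point of (weak-to-norm) continuity, gives $y \in \operatorname{dent}(B_Y)$, and the Proposition concludes $f \in \A(M,Y)$.

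There is essentially no obstacle beyond this combination; the only subtlety is invoking the correct piece of Banach-space geometry. A more self-contained variant avoids Lin--Lin--Troyanski via Lovaglia's equivalence ``strictly convex plus Kadec-Klee equals LUR'': for a norming $\varphi \in S_{Y^*}$ at $y$ and any $\{x_n\} \subset B_Y$ with $\varphi(x_n) \to 1$, one has $\|x_n + y\| \geq \varphi(x_n) + 1 \to 2$, so LUR forces $\|x_n - y\| \to 0$, showing that the slices $\{x \in B_Y : \varphi(x) > 1 - \alpha\}$ shrink to $\{y\}$ in diameter as $\alpha \to 0^+$; hence $y$ is strongly exposed, and therefore denting, and the Proposition again finishes the job.
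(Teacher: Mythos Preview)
Your primary argument is correct and coincides with the paper's: both invoke the Lin--Lin--Troyanski result \cite{LLT} to conclude that, under strict convexity plus the Kadec--Klee property, every point of $S_Y$ is a denting point of $B_Y$, and then apply the preceding Proposition. The paper simply quotes the global version of the LLT characterization, while you unpack it into ``extreme $+$ point of continuity $\Rightarrow$ denting''; these are the same idea at different levels of detail.

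Your proposed alternative, however, rests on a false premise: strict convexity together with the Kadec--Klee property does \emph{not} imply LUR in general (what is true is the renorming statement that a space admitting a strictly convex Kadec--Klee norm admits an equivalent LUR norm, but the given norm need not itself be LUR). So the step ``LUR forces $\|x_n - y\| \to 0$'' is unavailable, and the slice argument collapses. Drop the alternative and keep the LLT route.
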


\begin{proof}
According to \cite{LLT}, every point in $S_X$ is a denting point of $S_X$ if and only if $X$ is strictly convex and the weak topology and the norm topology coincide in $S_X$. The rest is clear.
\end{proof}

We now state the main result of this section which characterizes the dimension of range space in terms of the set of norm attaining Lipschitz maps toward vectors on metric spaces, a further generalization of \cite{CCM} with respect to the extended domain spaces. To do so, we define a crucial notion of a general metric space. Let us say a sequence with distinct elements $\{x_n\}$ in $M$ is \emph{pairly distanced} if there exists a sequence $\{r_n\} \subseteq [0,1]$ such that
\begin{enumerate}
\item[\textup{(i)}] $d(x_{2j-1},x_{2k-1}) \geq r_j d(x_{2j-1},x_{2j}) + r_kd(x_{2k-1},x_{2k}) \qquad \text{for all } j \neq k \in \N$,
\item[\textup{(ii)}] $d(x_{2j-1},x_{2k}) \geq r_j d(x_{2j-1},x_{2j}) + (1-r_k)d(x_{2k-1},x_{2k}) \qquad \text{for all } j \neq k \in \N$,
\item[\textup{(iii)}] $d(x_{2j},x_{2k}) \geq (1-r_j) d(x_{2j-1},x_{2j}) + (1-r_k)d(x_{2k-1},x_{2k}) \qquad \text{for all } j \neq k \in \N$.
\end{enumerate}
Geometrically, this means that each $r_jd(x_{2j-1},x_{2j})$-ball (resp. $(1-r_j)d(x_{2j-1},x_{2j})$-ball) of a sequence element $x_{2j-1}$ (resp. $x_{2j}$) is placed far from other balls. If $\{r_n\}$ can be chosen as a constant $r_n = r \in [0,1]$ for each $n \in \N$, then we say the sequence $\{x_n\}$ is \emph{$r$-pairly distanced}. In the next theorem, we will see that our characterization can be done for any infinite-dimensional range space, and it will be shown in Corollary \ref{cor:Euclidean} that subsets of finite-dimensional Banach spaces are main candidates of metric spaces on the domain. Obviously, usual vectors spaces are examples of metric spaces with a pairly distanced sequence. More concrete materials can be found in the following examples.

\begin{example}
Let $M$ be a metric space satisfying one of the following properties:
\begin{enumerate}
\item[\textup{(a)}] $M = \{0\} \cup \left\{ \dfrac{1}{3^n} : n \in \N \right\} \subseteq \R$.
\item[\textup{(b)}] \cite[Example 3.17]{CCGMR} $M = \displaystyle \bigcup_{n=2}^\infty \{0, p_n, q_n\} \subseteq c_0$ with $p_n=\left( 2 - \dfrac{1}{2^n}\right)e_n$ and $q_n = e_n + \left( 1 + \dfrac{1}{2^n} \right) e_1$.
\item[\textup{(c)}] \cite[Example 3.23]{CCGMR} $M$ is the metric sum of $M_n = \{0,p_n,q_n\}$ where $d(0,p_n)=d(0,q_n)=1+\dfrac{1}{2^n}$ and $d(p_n,q_n)=2$ for each $n \in \N$.
\end{enumerate}
Then, $M$ contains a pairly distanced sequence.
\end{example}

\begin{proof}
(a). By a direct computation, one can see that the sequence $\{x_n\}$ given by $x_n = \dfrac{1}{3^n}$ for each $n \in \N$ is $r$-pairly distanced for $3/4 \leq r \leq 1$. Observe that $M$ does not contain any $1/2$-pairly distanced sequence.

(b). The sequence $\{q_{n+1}\}$ is $1/2$-pairly distanced in $M$. Indeed, $d(q_j,q_k)=1$ for all $j \neq k \in \N \setminus \{1\}$ gives that all the requirements for $r_n=1/2$ can be easily verified.

(c). The sequence $\{p_1,q_1,p_2,q_2,\ldots\}$ is $1/2$-pairly distanced. Observe that $M$ does not contain any $r$-pairly distanced sequence unless $r = 1/2$.
\end{proof}

Now we are ready to prove the following characterization theorem on the existence of non-norm attaining Lipschitz maps toward vectors on a large class of metric spaces.

\begin{theorem}\label{theorem:Y-characterization}
Let $Y$ be a Banach space and $M$ be a metric space with a pairly distanced sequence. Then, $\A(M,Y) = \Lip(M,Y)$ if and only if $Y$ is finite-dimensional.
\end{theorem}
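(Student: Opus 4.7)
We prove the two directions separately. The ``if'' direction is a direct compactness argument: when $Y$ is finite-dimensional, for any nonzero $f \in \Lip(M,Y)$ one chooses distinct pairs $(p_n,q_n)$ with $\|f(p_n)-f(q_n)\|/d(p_n,q_n) \to \|f\|$ and extracts a convergent subsequence of the ratios in the norm-compact ball $\|f\|B_Y$; its limit $y$ satisfies $\|y\|=\|f\|$, so $f \in \A(M,Y)$.

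For the ``only if'' direction, assume $Y$ is infinite-dimensional. By Mazur's theorem fix a normalized basic sequence $\{y_n\} \subset S_Y$ with biorthogonal functionals $\{y_n^*\} \subset Y^*$ of uniformly bounded norm; the basic-sequence inequality then guarantees $\{y_n\}$ has no convergent subsequence and $\{y_n^*\}$ is total on $Z := \overline{\spann}\{y_n\}$. Let $\{x_n\} \subset M$ be a pairly distanced sequence with parameters $\{r_n\}$; set $d_n := d(x_{2n-1},x_{2n})$, $\rho_n := r_n d_n$, $\sigma_n := (1-r_n)d_n$, and introduce the $1$-Lipschitz truncated-distance bumps
$$
\phi_n(p) := \max\{0,\, \rho_n - d(p,x_{2n-1})\}, \qquad \psi_n(p) := \max\{0,\, \sigma_n - d(p,x_{2n})\}.
$$
Conditions (i)--(iii) together with the triangle inequality within each pair imply that at each $p \in M$, at most one of the values $\{\phi_n(p),\psi_n(p) : n \in \N\}$ is nonzero. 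Since the balls $B(x_{2n-1},\rho_n)$ are pairwise disjoint (and likewise for the $B(x_{2n},\sigma_n)$), the base point $0$ lies in at most two of them, and after discarding those (finitely many) pair indices we may assume $\phi_n(0) = \psi_n(0) = 0$ for every $n$. Pick $c_n \in (0,1)$ with $c_n \uparrow 1$ and define
$$
f(p) := \sum_{n=1}^\infty c_n \bigl(\phi_n(p) - \psi_n(p)\bigr)\, y_n \in Y,
$$
a pointwise sum with at most one nonzero term at each $p$. A short case analysis using the pairly distanced inequalities gives $f \in \Lip(M,Y)$ with $f(0)=0$ and $\|f\| \leq 1$; combining this with $\|f(x_{2n-1})-f(x_{2n})\|/d_n = c_n \to 1$ yields $\|f\| = 1$.

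To see that $f \notin \A(M,Y)$, let $(p_k,q_k)$ be any sequence of distinct pairs, set $v_k := [f(p_k)-f(q_k)]/d(p_k,q_k)$, and suppose toward contradiction $v_k \to y$ with $\|y\|=1$. Passing to a subsequence, label each point by the index of the unique pair whose support contains it, writing $n_k$ for $p_k$ and $m_k$ for $q_k$ (if either point lies outside every support, the corresponding contribution to $f$ is zero). One has $v_k \in \spann\{y_{n_k}, y_{m_k}\} \subset Z$; expanding $v_k = \alpha_k y_{n_k} + \beta_k y_{m_k}$, the pairly distanced inequalities combined with the triangle inequality yield the key bound $|\alpha_k| + |\beta_k| \leq \max(c_{n_k}, c_{m_k})$ when $n_k \neq m_k$ (and $|\alpha_k| \leq c_{n_k}$ when $n_k = m_k$). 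If $\{n_k,m_k\}$ remains bounded, along a further subsequence both indices stabilize, giving $\limsup_k \|v_k\| \leq \max(c_n, c_m) < 1$, contradicting $\|v_k\| \to 1$. Otherwise some index diverges to infinity: the biorthogonal functionals give $y_j^*(v_k) = 0$ for large $k$ and every fixed $j$ not equal to the (eventually constant) other index, so $y_j^*(y) = 0$. Since $y \in Z$ and $\{y_n^*|_Z\}$ is total on $Z$, either $y = \beta y_m$ with $|\beta| \leq c_m < 1$ (if one index stabilizes at $m$) or $y = 0$ (if both indices diverge); each outcome contradicts $\|y\| = 1$.

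The main obstacle is coordinating the geometric pairly distanced structure on $M$ with the algebraic basic-sequence structure on $Y$: the former yields disjoint Lipschitz ``slots'' indexed by pairs, while the latter supplies coordinatewise detection that prevents linear combinations of distinct normalized basis vectors from converging to a unit vector. The strictness $c_n < 1$ handles the bounded-index regime, while the no-convergent-subsequence and total-biorthogonal-system properties handle the unbounded-index regime. A minor technical point is ensuring $f(0)=0$ by discarding at most two pair indices whose bump supports happen to contain the base point.
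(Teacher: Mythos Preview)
Your proof is correct and follows essentially the same construction as the paper: the same truncated-distance bumps $\phi_n,\psi_n$ assembled into $f=\sum c_n(\phi_n-\psi_n)y_n$ (the paper uses $c_n=1-2^{-n}$ and subtracts $\hat f(0)$ instead of discarding indices), the same four-case Lipschitz estimate, and the same non-attainment mechanism based on $c_n<1$ and the basic-sequence structure; your organization by whether the support indices $\{n_k,m_k\}$ stay bounded is just a repackaging of the paper's split according to whether $d(p_n,q_n)\to\infty$.

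One small point deserves tightening. In the case where one index $m_k=m$ stabilizes while $n_k\to\infty$, your stated key bound $|\alpha_k|+|\beta_k|\le\max(c_{n_k},c_{m_k})$ alone does \emph{not} yield $|\beta|\le c_m$, since $\max(c_{n_k},c_m)\to 1$. What you actually need (and what is true) is the individual estimate $|\beta_k|\le c_m$: since $p_k$ lies outside both $m$-th balls, the triangle inequality gives $d(p_k,q_k)\ge \phi_m(q_k)$ (respectively $\ge\psi_m(q_k)$), whence $|\beta_k|=c_m\,|\phi_m(q_k)-\psi_m(q_k)|/d(p_k,q_k)\le c_m$. This is the same pairly-distanced plus triangle-inequality computation you already invoke, just applied to the single $q_k$-contribution; once stated, $|\beta|=\lim|\beta_k|\le c_m<1$ follows and the contradiction goes through.
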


\begin{proof}
If $Y$ is finite-dimensional, by the compactness it is clear that $\A(M,Y) = \Lip(M,Y)$. So assume that $Y$ is an infinite-dimensional Banach space.

Suppose that $M$ has a pairly distanced sequence. By hypothesis, we may find sequences $\{x_n\}$ in $M$ and $\{r_n\} \subseteq [0,1]$ satisfying the assumption. Let $\{y_n\} \subset S_Y$ be a basic sequence of distinct vectors of $Y$. We define $\hat{f}: M \to Y$ by
\scriptsize$$
\hat{f}(x) := \sum_{n=1}^\infty \left( 1 -\frac{1}{2^n}\right) \big[ \max \{ r_n d(x_{2n-1},x_{2n}) - d(x_{2n-1},x), 0 \} - \max \{ (1-r_n) d(x_{2n-1},x_{2n}) - d(x_{2n},x), 0 \} \big] \, y_n,
$$\normalsize
which is well-defined. More precisely, given any $x \in M$, $\hat{f}(x)$ is of the form
\footnotesize$$
\hat{f}(x) = \left( 1 -\frac{1}{2^j}\right) \big[ \max \{ r_j d(x_{2j-1},x_{2j}) - d(x_{2j-1},x), 0 \} - \max \{ (1-r_j) d(x_{2j-1},x_{2j}) - d(x_{2j},x), 0 \} \big] \, y_j
$$\normalsize
for some $j \in \N$. Set $f(x) : = \hat{f}(x) - \hat{f}(0)$, and we claim that $f \in \Lip(M,Y)$ with $\|f\|=1$ and $f$ does not attains its norm toward any vector.

To see that $f \in \Lip(M,Y)$ with $\|f\| = 1$, let $p, q \in M$ be given. Observe that we may assume the case is reduced into at most 4 cases:
\begin{enumerate}
\item[\textup{(i)}] $p, q \in M_j$ for some $j \in \N$,
\item[\textup{(ii)}] $p \in M_j, q \in M_k$ for some $j \neq k \in \N$,
\item[\textup{(iii)}] $p \in M_j$ for some $j \in \N$, $q \notin M_n$ for any $n \in \N$,
\item[\textup{(iv)}] $p,q \notin M_n$ for any $n \in \N$,
\end{enumerate}
where $M_n := \{x \in M : d(x_{2n-1},x) < r_nd(x_{2n-1},x_{2n}) \text{ or } d(x_{2n},x) < (1-r_n)d(x_{2n-1},x_{2n})\}$. Assume here that $d(x_{2j-1},p) < r_j d(x_{2j-1},x_{2j})$ and $d(x_{2k},q) < (1-r_k) d(x_{2k-1},x_{2k})$ for simplicity, and the rest case can be done similarly.

$
$

(i). In this case, we have
\footnotesize\begin{align*}
\|f(p)-f(q)\| &= \left\| \left(1 -\frac{1}{2^j}\right) \bigl[ r_jd(x_{2j-1},x_{2j}) - d(x_{2j-1},q) + (1-r_j)d(x_{2j-1},x_{2j}) - d(x_{2j},q) \bigr] y_j \right\| \\
&= \left\| \left(1 - \frac{1}{2^j} \right) \bigl[ d(x_{2j-1},x_{2j}) - d(x_{2j-1},p) - d(x_{2j},q) \bigr] y_j \right\| \leq d(p,q).
\end{align*}\normalsize

(ii). By the condition of $p,q \in M$, we obtain
\begin{align*}
\|f(p)-f(q)\| &\leq \bigl[ r_jd(x_{2j-1},x_{2j})-d(x_{2j-1},p) \bigr] + \bigl[ (1-r_k)d(x_{2k-1},x_{2k})-d(x_{2k},q) \bigr] \\
&\leq d(x_{2j-1},x_{2k}) - d(x_{2j-1},p) - d(x_{2k},q) \\
&\leq d(p,q).
\end{align*}

(iii). It is clear that $\hat{f}(q)=0$. Since $q \notin M_j$, it follows that
\begin{align*}
\|f(p)-f(q)\| &= \left\| \left( 1-\frac{1}{2^j} \right) \bigl[ r_j d(x_{2j-1},x_{2j}) - d(x_{2j-1},p) \bigr] y_j \right\| \\
&\leq \left\| \left( 1-\frac{1}{2^j} \right) \bigl[ d(x_{2j-1},q) - d(x_{2j-1},p) \bigr] y_j \right\| \leq d(p,q).
\end{align*}

(iv). It is straightforward that $f(p)-f(q)=0$ in this case. In order to obtain the promised result, we consider $x_{2j-1}, x_{2j} \in M$ for each $j \in \N$ to derive
$$
\|f(x_{2j-1})-f(x_{2j})\| = \left( 1 -\frac{1}{2^j}\right) d(x_{2j-1},x_{2j}),
$$
and this leads to that $\|f\| =1$.

Now, suppose that $f$ attains its norm toward $y_0$ for some vector $y_0 \in S_Y$. As $\{y_n\}$ is a basic sequence, it follows that $y_0 = \sum_{n=1}^\infty a_n y_n$ for some coefficients such that the series converges. We may assume that $a_{n_0} \neq 0$ for some fixed $n_0$. From that $f$ attains its norm toward $y_0$, there is a sequence of distinct pairs $\{(p_n,q_n)\} \subseteq M \times M$ such that $\lim_n [f(p_n)-f(q_n)]/d(p_n,q_n) = y_0$, and hence at least one of $p_n,q_n$ must belong to $M_{n_0}$ passing to a subsequence. Without any loss of generality, $p_n \in M_{n_0}$ for all $n \in \N$ by admitting the abuse of index, and thus the remaining possibilities are either when (i)' $\lim_n d(p_n,q_n) = \infty$ or (ii)' $\lim_n d(p_n,q_n) < \infty$.

(i)'. This means that $q_n \notin M_{n_0}$ for sufficiently large $n$. Therefore, the coefficient $a_{n_0}$ is obtained by
$$
|a_{n_0}| \leq \lim_n \frac{1- \dfrac{1}{2^{n_0}}}{d(p_n,q_n)} =0,
$$
a contradiction.

(ii)'. We split the case into when (iii)' $q_n \in M_{n_0}$ for all $n \in \N$ or when (iv)' $q_n \notin M_{n_0}$ for all $n \in \N$ passing to a subsequence. Since it is clear that $f$ cannot attain its norm toward $y_0$ in (iii)', we shall prove (iv)'. For simplicity, assume $q_n \in M_{m_n}$ where $m_n \neq n_0$ for each $n \in \N$, and again that $d(x_{2n_0-1},p) < r_{n_0} d(x_{2n_0-1},x_{2n_0})$ and $d(x_{2m_n},q) < (1-r_{m_n}) d(x_{2m_n-1},x_{2m_n})$. An analogous argument of (ii) shows that
\begin{align*}
&\|f(p_n)-f(q_n)\| + \frac{1}{2^{n_0}} \bigl[ r_{n_0}d(x_{2n_0-1},x_{2n_0}) - d(x_{2n_0-1},p_n) \bigr] \\
&\leq \bigl[r_{n_0}d(x_{2n_0-1},x_{2n_0}) - d(x_{2n_0-1},p_n)\bigr] + \bigl[(1-r_{m_n})d(x_{2m_n-1},x_{2m_n}) - d(x_{2m_n},q_n)\bigr] \\
&\leq d(p_n,q_n).
\end{align*}
Therefore,
$$
\|f(p_n)-f(q_n)\| \leq d(p_n,q_n) - \frac{1}{2^{n_0}} \bigl[ r_{n_0}d(x_{2n_0-1},x_{2n_0}) - d(x_{2n_0-1},p_n) \bigr]
$$
for all $n \in \N$, and noting $\lim_n d(p_n,q_n)< \infty$ and $a_{n_0} \neq 0$ yields again that $f$ cannot attain its norm toward $y_0$, which contradicts to our assumption.
\end{proof}

An important consequence of Theorem \ref{theorem:Y-characterization} is that on a large class of metric spaces we can argue the existence of non-norm attaining Lipschitz maps toward vectors. For instance, if $M$ is a metric space which involves an infinite subset of the Euclidean metric structure, there always exists a non-norm attaining one whenever $Y$ is an infinite-dimensional Banach space.

\begin{cor}\label{cor:Euclidean}
Let $M$ be any infinite subset of a finite-dimensional Banach space and $Y$ be an infinite-dimensional Banach space. Then, $\A(M,Y) \subsetneqq \Lip(M,Y)$.
\end{cor}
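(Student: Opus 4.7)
The plan is to deduce Corollary~\ref{cor:Euclidean} from Theorem~\ref{theorem:Y-characterization}: once a pairly distanced sequence is exhibited inside $M$, the theorem forces $\A(M,Y)\subsetneqq\Lip(M,Y)$ because $Y$ is infinite-dimensional. So the whole task is to produce such a sequence in an arbitrary infinite subset $M$ of a finite-dimensional Banach space $E$.

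I would exploit the finite-dimensionality of $E$ twice. Because $M$ is infinite in a locally compact space, one may extract a sequence of distinct points $\{p_n\}\subseteq M$ satisfying either (A) $p_n\to x_0$ for some $x_0\in E$, or (B) $\|p_n\|\to\infty$. By compactness of the unit sphere $S_E$, I pass to a further subsequence so that the unit vectors $v_n:=(p_n-x_0)/\|p_n-x_0\|$ in case (A), or $v_n:=p_n/\|p_n\|$ in case (B), converge to some $u\in S_E$. A third thinning arranges that the scalars $\beta_n:=\|p_n-x_0\|$ in case (A) (resp.\ $\alpha_n:=\|p_n\|$ in case (B)) decay (resp.\ grow) super-geometrically with some fixed ratio $C>1$, while $\|v_n-u\|$ eventually falls below any preassigned threshold.

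The identity
\[
p_n-p_m=(\beta_n-\beta_m)u+\beta_n(v_n-u)-\beta_m(v_m-u)
\]
(and its analogue in case (B)) then guarantees that each pairwise distance $\|p_n-p_m\|$ agrees with the one-dimensional model value $|\beta_n-\beta_m|$ up to an error bounded by $\beta_n\|v_n-u\|+\beta_m\|v_m-u\|$. In the 1D geometric model $\beta_n=1/C^n$, essentially the preceding example on $\{0\}\cup\{1/3^n\}$, a direct computation verifies the three defining inequalities of a pairly distanced sequence for the uniform choice $r_n\equiv 1$: inequality (iii) is trivially true, while (i) and (ii) hold with strict positive slack proportional to $\beta_{\max(n,m)}$, ultimately reducing to the tautology $(C-1)^2\geq 0$.

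The main obstacle is not conceptual but lies in coordinating the super-geometric rate with the decay of $\|v_n-u\|$ so that the direction-induced error is dominated by the 1D slack uniformly across all relevant index pairs. Since both rates are at our disposal through the sparse choice of subsequence, this step is routine, and the perturbation argument then produces a pairly distanced sequence in $M$, completing the proof.
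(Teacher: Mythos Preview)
Your approach is correct and takes a genuinely different route from the paper. The paper argues first in $\ell_\infty^m$, pigeonholing points into orthants (and, in the unbounded case, into certain cylinders) to build successive pairs whose max-norm balls are mutually disjoint, thereby obtaining a $1$-pairly distanced sequence; the general finite-dimensional case is then asserted via an analogous hyperplane partition, with details omitted. You instead use compactness of $S_E$ to align a subsequence along a single direction $u$ and reduce everything to a perturbation of a one-dimensional geometric progression; this is cleaner and works directly in any finite-dimensional norm, without appeal to coordinate structure. Two small points to tighten when you write it out: the slack in (i) and (ii) for the model $\beta_n=C^{-n}$ is actually proportional to the \emph{larger} of the two $\beta$'s involved (that is, to $\beta_{\min(n,m)}$, not $\beta_{\max(n,m)}$), and this is exactly what makes the perturbation harmless, since the direction error is also bounded by the larger $\beta$ times $\sup_\ell\|v_\ell-u\|$---so a single uniform threshold on $\|v_\ell-u\|$ depending only on $C$ suffices and no delicate coordination of rates is required; and in case~(B) the increasing model calls for $r_n\equiv 0$ (equivalently, swap $x_{2j-1}$ and $x_{2j}$ within each pair) rather than $r_n\equiv 1$.
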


\begin{proof}
We will prove the case when $M$ is a subset of the $m$-dimensional space $\ell_\infty^m$ endowed with the maximum norm. Let us show that $M$ contains a $1$-pairly distanced sequence. Suppose that $M$ does not have any limit point in $\ell_\infty^m$. Take any distinct points $p,q \in M$. Fix $x_1 :=p$, $x_2:=q$, and choose a proper closed ball $B_1=B(x_1,2d(x_1,x_2))$ in $\ell_\infty^m$ centred at $x_1$ with a radius $2d(x_1,x_2)$ so that $x_1,x_2 \in B_1$. Since $M$ is infinite and it does not contain any limit point, $M \setminus B_1$ must be infinite. It follows that one of the orthants, a generalization of quadrants for $m$-dimenison, centred at $x_1$ outside $B_1$ must have infinite points of $M$ in it. Let $x_1 + \mathcal{O}_1$ be the corresponding translated orthant centred at $x_1$, and let $\eps_m \in \{-1,1\}$ be the signs of each coordinate determined by the orthant $\mathcal{O}_1$. If $M \cap [x_1 + (2\eps_1d(x_1,x_2), \ldots, 2\eps_md(x_1,x_2)) + \mathcal{O}_1]$ has infinite points, then we may choose any distinct $x_3, x_4 \in M \cap [x_1 + (2\eps_1d(x_1,x_2), \ldots, 2\eps_md(x_1,x_2)) + \mathcal{O}_1]$ possibly with a reversed order such that the closed ball centred at $x_3$ with a radius $d(x_3,x_4)$ has no intersection with $B(x_1,d(x_1,x_2))$. Otherwise, one of the cyliners $\mathcal{C}_j := \{x_1 + (\eps_1z_1, \ldots, \eps_mz_m) \in \ell_\infty^m: 0 \leq z_k \leq 2d(x_1,x_2) \text { for } k \neq j,\, z_j \geq 4d(x_1,x_2)\}$ must contain infinitely many points in $M$, and we may select $x_3, x_4$ from there. Now we take another closed ball $B_2$ centred at $r_2$ with a radius $2k_2$, such that $B_1 \cup B(x_3,d(x_3,x_4)) \subseteq B(r_2, k_2)$. By an inductive procedure, we may find new $\mathcal{O}_2$ centred at $r_2$ in a natural way. Consequently, $\{x_n\}$ is our desired sequence in $M$.

Suppose now that $M$ has a limit point $x_0 \in \ell_\infty^m$. Similarly, there is an open (as the case with infinite points in the hyperplane is much easier) orthant $x_0 + \mathcal{O}_1$ such that $M \cap (x_0 + \mathcal{O}_1)$ has a limit point $x_0$ in $\ell_\infty^m$. Take any $p \in [M \cap (x_0 + \mathcal{O}_1)] \setminus \{x_0\}$, and we can choose a proper $q \in [M \cap (x_0 + \mathcal{O}_1)] \setminus \{x_0\}$ such that the closed neighborhood $B(p,d(p,q))$ does not contain $x_0$. Put $x_1:=p$ and $x_2:=q$. The next step is to find $x_3,x_4 \in [M \cap (x_0 + \mathcal{O}_1)] \setminus \{x_0\}$ very close to $x_0$ so that the ball $B(x_3,d(x_3,x_4))$ has no intersection with $B(x_1,d(x_1,x_2))$ and $\{x_0\}$. As $x_0$ is a limit point, this process can be done infinitely many times, and $\{x_n\}$ is the desired sequence in $M$. The proof of an arbitrary finite-dimensional space can be done in a similar way by considering instead the partitions of orthant splitted by the hyperplanes $\mathcal{H}_{i,j} := \{(z_1,\ldots,z_m) \in \R^m: \eps_i z_i = \eps_j z_j\}$ for each $i \neq j$, but we omit the detail of its proof as it is repetitive.
\end{proof}

Another consequence of Theorem \ref{theorem:Y-characterization} is that such characterization is always possible by supplementing at most countable additional points in any metric space.

\begin{cor}
Let $M$ be a metric space and $Y$ be an infinite-dimensional Banach space. Then, there exists a metric space $M'$ satisfying the following properties:
\begin{enumerate}
\item[\textup{(i)}] $\A(M',Y) \subsetneqq \Lip(M',Y)$.
\item[\textup{(ii)}] $M' \setminus M$ is countable.
\item[\textup{(iii)}] $M$ is a metric subspace of $M'$, that is, $M \subseteq M'$ with the induced metric.
\end{enumerate}
\end{cor}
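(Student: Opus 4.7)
The plan is to construct $M'$ by taking a metric sum of $M$ with a space that is already known to contain a pairly distanced sequence, and then invoke Theorem \ref{theorem:Y-characterization}. The natural candidate is the space from Example (c) above, which adds only countably many points and whose $1/2$-pairly distanced sequence will survive inside the larger metric sum.

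Concretely, for each $n \in \N$ let $M_n := \{0, p_n, q_n\}$ with $d(0,p_n) = d(0,q_n) = 1 + 1/2^n$ and $d(p_n,q_n) = 2$, and define
$$
M' := M \coprod \Bigl( \coprod_{n \in \N} M_n \Bigr),
$$
identifying all base points with $0 \in M$. Property (iii) is immediate from the definition of the metric sum, since the distance between two points of $M$ in $M'$ equals their original distance in $M$. Property (ii) is clear because $M' \setminus M = \bigcup_n \{p_n, q_n\}$ is countable.

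For property (i), it suffices by Theorem \ref{theorem:Y-characterization} to show that $M'$ contains a pairly distanced sequence; the candidate is $\{p_1, q_1, p_2, q_2, \ldots\}$, i.e.\ $x_{2j-1} = p_j$ and $x_{2j} = q_j$, with $r_j = 1/2$ for all $j$. Since any two distinct $M_n$'s are glued only at the common base point $0$, the metric-sum formula gives $d(p_j, p_k) = d(p_j, q_k) = d(q_j, q_k) = (1+1/2^j) + (1+1/2^k) \geq 2$ for $j \neq k$, while $d(x_{2j-1}, x_{2j}) = d(p_j, q_j) = 2$. All three inequalities defining a $1/2$-pairly distanced sequence then reduce to $2 + 1/2^j + 1/2^k \geq 2$, which trivially holds. (This is essentially the computation already performed in Example (c), and it is unaffected by attaching $M$ at the base point since all the relevant distances only use the $M_n$ part of the metric sum.)

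No genuine obstacle is expected: the construction is clean, the metric on $M'$ is given by an explicit formula, and the verification of the pairly distanced condition is a direct instance of Example (c). The only minor subtlety worth mentioning is that we rely on the fact that the $1/2$-pairly distanced property of $\{p_n, q_n\}$ is an intrinsic property of distances that all lie inside $\coprod_n M_n \subseteq M'$, so enlarging the ambient space by adding $M$ does not disturb it.
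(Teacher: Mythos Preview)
Your proof is correct and follows essentially the same approach as the paper: form the metric sum of $M$ with a countable space already known to contain a pairly distanced sequence, observe that this sequence remains pairly distanced in $M'$, and apply Theorem~\ref{theorem:Y-characterization}. The only difference is cosmetic---you instantiate $M_0$ with the concrete Example~(c) and spell out the distance computations, whereas the paper leaves $M_0$ as any metric space consisting of a pairly distanced sequence and omits the verification.
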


\begin{proof}
Define $M'$ by the metric sum of two spaces $M$ and $M_0$, where $M_0$ consists of a pairly distanced sequence $\{x_n\}$ as in Theorem \ref{theorem:Y-characterization}. It is evident that $M' \setminus M$ is countable and $M \subseteq M'$ with the induce metric. Moreover, we have $\A(M',Y) \subsetneqq \Lip(M',Y)$ since $\{x_n\}$ is pairly distanced as well in $M'$.
\end{proof}

\
\

\section{Denseness of norm attaining Lipschitz maps toward vectors}\label{section:dense}

In this section, we provide the known examples of pairs of $(M,Y)$ such that $\A(M,Y)$ is dense in $\Lip(M,Y)$ to give an improvement of new pairs for denseness of positive and negative answers. Recall first that in \cite{CCGMR}, they organized a large pool of metric spaces $M$ such that norm attaining Lipschitz maps are dense in the strong sense regardless of the choice of $Y$, in particular when $\F(M)$ has a sufficient condition to the Lindenstrauss property A. Very recently in \cite{CGMR}, they constructed a metric space $M$ explicitly so that $\SA(M,Y)$ is dense in $\Lip(M,Y)$ for arbitrary Banach space $Y$. This answers some unsolved questions given in \cite{CCGMR}, as there was previously no specific example of metric spaces of denseness such that $\F(M)$ fails the RNP. We give here a partial list of examples of them as follows for convenience.

\begin{example}\cite{CCGMR,CGMR}
Let $M$ be a metric space with one of the following properties:
\begin{enumerate}
\item[\textup{(a)}] $M$ is uniformly discrete.
\item[\textup{(b)}] $M$ is countable compact.
\item[\textup{(c)}] $M$ is a compact H\"older metric space.
\item[\textup{(d)}] $M$ is a closed subset of $\R$ with measure zero.
\item[\textup{(e)}] $M$ is a metric space constructed as in \cite[Theorem 2.5]{CGMR}.
\end{enumerate}
Then, $\A(M,Y)$ is dense in $\Lip(M,Y)$ for every Banach space $Y$.
\end{example}

In the analogue of \cite{CCJM}, they improved the denseness result on the norm attainment for Lipschitz maps toward vectors in view of range spaces. Mainly they deal with the pairs of Banach spaces, and we state down the extended version of the result for the sake of completeness.

\begin{prop}
Let $M$ be a metric space and $Y$ be a Banach space with the RNP. Then, $\A(M,Y)$ is dense in $\Lip(M,Y)$.
\end{prop}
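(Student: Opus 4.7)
The plan is to reduce the statement to its purely linear counterpart proved in \cite{CCJM}, using the canonical isometric identification $\Lip(M,Y) \cong \Lin(\F(M),Y)$, $f \mapsto T_f$. Given $f \in \Lip(M,Y)$ and $\eps > 0$, it suffices to produce $g \in \A(M,Y)$ with $\|T_g - T_f\| < \eps$. The strategy is to perturb $T_f$ first into $\QNA(\F(M),Y)$ via the cited linear density result, and then promote the quasi norm attainment to honest norm attainment toward a vector for the associated Lipschitz map by arranging the target to be a denting point and applying the Proposition stated just before Corollary \ref{cor:str-cvx-KK}.

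The first step relies on the main density theorem of \cite{CCJM}: since $Y$ has the RNP, $\QNA(X,Y)$ is dense in $\Lin(X,Y)$ for every Banach space $X$; taking $X = \F(M)$ produces $S \in \QNA(\F(M),Y)$ with $\|S - T_f\| < \eps$. A close reading of the Bourgain--Stegall type variational argument underlying that theorem shows that the RNP of $Y$ is used precisely to provide slices of $B_Y$ of arbitrarily small diameter, so that one may arrange $S$ to quasi attain its norm toward a vector $y_0$ whose normalization lies in $\operatorname{dent}(B_Y)$. Writing $S = T_g$ and normalizing, the hypothesis of the Proposition preceding Corollary \ref{cor:str-cvx-KK} is met, and that proposition immediately upgrades the quasi norm attainment of $T_g$ to norm attainment of $g$ toward $y_0$, giving $g \in \A(M,Y)$ with $\|g - f\| = \|T_g - T_f\| < \eps$.

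The main delicacy lies in the second step, namely extracting from \cite{CCJM} that the target $y_0$ may be chosen as a denting point of the range ball. If the version of the theorem stated there delivers only an arbitrary unit vector as target, a convenient workaround is to apply it with an $\eps/2$ tolerance, yielding some $S_0 \in \QNA(\F(M),Y)$, and then correct it with an auxiliary small (for instance, finite-rank) perturbation to steer the target into $\operatorname{dent}(B_Y)$. Such a steering is feasible precisely because the RNP of $Y$ guarantees that $\operatorname{dent}(B_Y)$ is rich enough to generate $B_Y$ as a closed convex hull, so the overall perturbation can be kept below $\eps$. Combined with the Proposition preceding Corollary \ref{cor:str-cvx-KK}, this delivers the desired $g \in \A(M,Y)$ within $\eps$ of $f$, proving the density.
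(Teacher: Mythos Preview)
Your overall plan---reduce to a linear density statement from \cite{CCJM} and then upgrade quasi attainment to membership in $\A(M,Y)$---is sound in spirit, but the upgrade step has a genuine gap. Neither of your two proposed mechanisms for ensuring the target vector is a denting point is actually justified. The first (``a close reading of the Bourgain--Stegall type argument shows\dots'') is an assertion about the internal structure of a proof you have not reproduced; even if plausible, it is not a proof. The second (``correct it with an auxiliary small finite-rank perturbation to steer the target into $\operatorname{dent}(B_Y)$'') is simply not established: adding a small finite-rank operator to $S_0$ changes both $\|S_0\|$ and the norming sequences in ways you have not controlled, and there is no reason the perturbed operator should still quasi attain its norm, let alone toward a prescribed denting point. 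The fact that $\overline{\operatorname{co}}(\operatorname{dent}(B_Y)) = B_Y$ under the RNP says denting points are plentiful, but it does not by itself produce the required perturbation.

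The paper sidesteps this difficulty entirely by invoking a sharper result from \cite{CCJM}: not merely the density of $\QNA(\F(M),Y)$, but the density of \emph{uniquely} quasi norm attaining operators (their Corollary~3.10). Unique quasi attainment means that \emph{every} sequence $(\mu_n)$ in $B_{\F(M)}$ with $\|S\mu_n\| \to \|S\|$ admits a subsequence with $S\mu_{\sigma(n)} \to y_0$ for some $y_0$ of norm $\|S\|$. Since $\overline{\operatorname{co}}(\operatorname{Mol}(M)) = B_{\F(M)}$, one can choose the norming sequence to consist of molecules from the outset, and the convergent subsequence then witnesses $g \in \A(M,Y)$ directly---no denting-point argument is needed. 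This is both shorter and avoids the unverified steering step in your proposal.
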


\begin{proof}
Since $Y$ has the RNP, by \cite[Corollary 3.10]{CCJM} the set of uniquely quasi norm attaining operators from $\F(M)$ into $Y$ is dense in $\Lin(\F(M),Y)$. Given any $T \in \Lin(\F(M),Y)$, we may find an operator $S$ approximating $T$ such that for every sequence $\{m_{p_n,q_n}\} \subseteq \operatorname{Mol}(M)$ with $\lim_n \|S(m_{p_n,q_n})\| = \|S\|$, there is a subsequence such that $S(m_{p_{\sigma(n)},q_{\sigma(n)}})$ converges to some vector $y_0$ with $\|y_0\|=\|S\|$. As we have $\overline{\operatorname{co}}(\operatorname{Mol}(M)) = B_{\F(M)}$, this completes the proof.
\end{proof}

On the other hand, as a direct consequence of Corollary \ref{cor:str-cvx-KK}, we have another denseness result for $\A(M,Y)$. However, we do not know whether the following result can provide a new result which has not been known.

\begin{cor}
Let $M$ be a metric space and $Y$ be a strictly convex Banach space with the Kadec-Klee property such that $\QNA(\F(M),Y)$ is dense in $\Lin(\F(M),Y)$. Then, $\A(M,Y)$ is dense in $\Lip(M,Y)$.
\end{cor}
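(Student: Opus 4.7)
The plan is to reduce the denseness statement for $\A(M,Y)$ to the analogous statement for $\QNA(\F(M),Y)$ by invoking the isometric isomorphism $\Lip(M,Y) \cong \Lin(\F(M),Y)$ given by $f \mapsto T_f$, and then applying the characterization already proved in Corollary \ref{cor:str-cvx-KK}.

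More concretely, I would fix $f \in \Lip(M,Y)$ and $\eps > 0$, and consider the associated operator $T_f \in \Lin(\F(M),Y)$. By the hypothesis that $\QNA(\F(M),Y)$ is dense in $\Lin(\F(M),Y)$, there exists an operator $S \in \QNA(\F(M),Y)$ with $\|T_f - S\| < \eps$. Since every bounded linear operator from $\F(M)$ into $Y$ arises as $T_g$ for a unique $g \in \Lip(M,Y)$ with $\|f - g\| = \|T_f - T_g\|$, I obtain a Lipschitz map $g$ such that $T_g = S$ and $\|f - g\| < \eps$. Finally, the hypothesis that $Y$ is strictly convex with the Kadec-Klee property lets me apply Corollary \ref{cor:str-cvx-KK} to conclude $g \in \A(M,Y)$, which gives the desired approximation.

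There is no real obstacle here; the statement is essentially a formal consequence of the previously established equivalence $\A(M,Y) \leftrightarrow \QNA(\F(M),Y)$ (valid for strictly convex $Y$ with the Kadec-Klee property) transported through the isometric identification $\Lip(M,Y) \cong \Lin(\F(M),Y)$. The only thing worth emphasizing in the write-up is that this identification is an isometry, so approximation in operator norm translates verbatim to approximation in Lipschitz norm.
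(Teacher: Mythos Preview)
Your proof is correct and follows exactly the approach intended by the paper, which simply records this result as a direct consequence of Corollary~\ref{cor:str-cvx-KK} together with the isometric identification $\Lip(M,Y)\cong\Lin(\F(M),Y)$. There is nothing to add.
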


We move on to the next step of the denseness, so-called the stability problem. There have been many efforts handling the heredity of norm attainment for operators and Lipschitz maps. We refer to \cite{CM, CDJM} for these kinds of study which have been done recently, and we basically follow their ideas. Recall that an \emph{absolute norm} $|\cdot|_a$ on $\R^2$ is a norm satisfying that (i) $|(1,0)|_a=|(0,1)|_a=1$, (ii) $|(p,q)|_a = |(|p|,|q|)|_a$ for all $p,q \in \R$. An \emph{absolute sum} $Y$ of Banach spaces $Y_1$ and $Y_2$ is a direct sum $Y_1 \oplus_a Y_2$ endowed with the absolute norm $\|\cdot\|_a$, where $\|\cdot\|_a$ is a norm satisfying that $\|(y_1,y_2)\|_a = |(\|y_1\|,\|y_2\|)|_a$ for all $y_1 \in Y_1$ and $y_2 \in Y_2$. A Banach space $Y_1$ is called an \emph{absolute summand} of $Y$ if $Y= Y_1 \oplus_a Y_2$ for some Banach space $Y_2$.

\begin{prop}\label{prop:absolute-sum}
Let $M$ be a metric space and $Y$ be a Banach space. If $Y_1$ is an absolute summand of $Y$ and $\A(M,Y)$ is dense in $\Lip(M,Y)$, then $\A(M,Y_1)$ is dense in $\Lip(M,Y_1)$.
\end{prop}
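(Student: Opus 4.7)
The natural approach is to transport the density of $\A(M,Y)$ down to $\A(M,Y_1)$ via the canonical isometric embedding $Y_1 \hookrightarrow Y$ given by $y \mapsto (y,0)$. Given $f \in \Lip(M,Y_1)$ and $\epsilon > 0$, I would first observe that the absolute-norm identity $|(a,0)|_a = a$ (which follows directly from axioms (i) and (ii) of $|\cdot|_a$) turns the assignment $f \mapsto \hat{f} := (f,0)$ into an isometric embedding $\Lip(M,Y_1) \hookrightarrow \Lip(M,Y)$; in particular $\|\hat{f}\|_{\Lip} = \|f\|_{\Lip}$.

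By the hypothesis that $\A(M,Y)$ is dense in $\Lip(M,Y)$, there exists $g \in \A(M,Y)$ with $\|g-\hat{f}\|_{\Lip} < \epsilon$; decompose $g = (g_1,g_2)$ with $g_i = P_i \circ g$. Since the coordinate projection $P_1 : Y \to Y_1$ is a norm-one contraction (a consequence of the monotonicity of $|\cdot|_a$ on the non-negative quadrant, which follows from the absolute property together with convexity), one has $\|g_1 - f\|_{\Lip} \leq \|g - \hat{f}\|_{\Lip} < \epsilon$ and similarly $\|g_2\|_{\Lip} < \epsilon$. The proposed approximant is $g_1 \in \Lip(M,Y_1)$, and it remains to certify that $g_1 \in \A(M,Y_1)$.

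For this verification, I would use the norm-attaining data of $g$: pick distinct pairs $\{(p_n,q_n)\}$ and $(u,v) \in Y$ with $|(\|u\|,\|v\|)|_a = \|g\|$ such that $[g(p_n)-g(q_n)]/d(p_n,q_n) \to (u,v)$. Projecting onto the first coordinate yields $[g_1(p_n)-g_1(q_n)]/d(p_n,q_n) \to u$ in $Y_1$, with $\|u\| \leq \|g_1\|$. The goal is to upgrade this inequality to $\|u\| = \|g_1\|$, which would exhibit $g_1$ as attaining its norm toward $u$.

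The main obstacle is securing the equality $\|u\| = \|g_1\|$. One clean case is $\|v\|=0$: then $\|g\| = |(\|u\|,0)|_a = \|u\|$, so the chain $\|u\| \leq \|g_1\| \leq \|g\| = \|u\|$ collapses to equality and we are done. The serious case is $\|v\| > 0$; here one only gets the approximate estimate $\|u\| \geq \|g\|-\|g_2\| \geq \|g_1\| - \epsilon$. To close the gap, I would argue by contradiction using the monotonicity of $|\cdot|_a$: if any pair $(p',q')$ achieved a ratio $\|g_1(p')-g_1(q')\|/d(p',q') > \|u\|$, then together with the trivial bound $\|g_2(p')-g_2(q')\|/d(p',q') \leq \|g_2\|$ and the strict monotonicity of the absolute norm in its first coordinate, this would force $\|g(p')-g(q')\|_a/d(p',q') > |(\|u\|,\|v\|)|_a = \|g\|$, contradicting the definition of $\|g\|$. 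Should this strict-monotonicity argument fail for degenerate absolute norms (such as $\ell_\infty$-sums, where monotonicity is not strict in each coordinate), the handler would be to first reduce to the case $\|v\| < \|g_1\|$ using the control $\|g_2\|<\epsilon$ and $\|g_1\|$ close to $\|f\|$, which in those degenerate cases is precisely what forces $\max(\|u\|,\|v\|) = \|u\|$ and hence $\|u\| = \|g_1\|$.
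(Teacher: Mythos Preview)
Your contradiction argument for $\|u\|=\|g_1\|$ does not close. From $a':=\|g_1(p')-g_1(q')\|/d(p',q')>\|u\|$ and $b':=\|g_2(p')-g_2(q')\|/d(p',q')\le\|g_2\|$ you cannot conclude $|(a',b')|_a>|(\|u\|,\|v\|)|_a$: monotonicity of $|\cdot|_a$ in the first variable only gives $|(a',b')|_a\ge|(\|u\|,b')|_a$, and there is no control of $b'$ from below by $\|v\|$ (indeed $b'$ may well be $0$). This is not a degeneracy issue peculiar to $\ell_\infty$; it already fails for $\ell_2$. Concretely, take $Y_1=c_0$, $Y_2=\R$, $Y=c_0\oplus_2\R$, and $M=\coprod_{n\ge0}\{0,p_n\}$ with $d(0,p_n)=1$. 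Define $g(p_0)=(\sqrt{1-\delta^2}\,e_1,\delta)$ and $g(p_n)=((1-1/n)e_n,0)$ for $n\ge2$. Then $\|g\|=1$ is attained at $(0,p_0)$ toward $(\sqrt{1-\delta^2}\,e_1,\delta)$, and $\|g_2\|=\delta$ is as small as you like; yet $\|g_1\|=1>\sqrt{1-\delta^2}=\|u\|$, and $g_1\notin\A(M,c_0)$ because the only near-norming incremental quotients are $(1-1/n)e_n$, which do not converge in $c_0$. So the bare projection $g_1$ is not a valid approximant in general.

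The paper does not use $g_1$ alone. After choosing $\hat g\in\A(M,Y)$ with $\|\hat g-\hat f\|<\eps$ attaining toward $(y_1,y_2)$, it picks a norming functional $(y_1^*,y_2^*)\in S_{Y_1^*\oplus_{a^*}Y_2^*}$ and defines
\[
g(x)=\|y_1^*\|\,g_1(x)+y_2^*\bigl(g_2(x)\bigr)\,\frac{y_1}{\|y_1\|}\in Y_1.
\]
The point of folding the $g_2$-part back into the $y_1$-direction via $y_2^*$ is exactly to force $\|g\|=1$ together with convergence of the incremental quotients to $y_1/\|y_1\|$, using the identity $\|y_1^*\|\|y_1\|+y_2^*(y_2)=1$ coming from the absolute-sum duality. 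That correction term is the missing idea in your proposal; without it one cannot guarantee that the $Y_1$-valued approximant lands in $\A(M,Y_1)$.
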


\begin{proof}
Let $Y = Y_1 \oplus_a Y_2$, $0<\eps<1$ and $f \in \Lip(M,Y_1)$ with $\|f\|=1$ be given. Define $\hat{f} \in \Lip(M,Y)$ by $\hat{f}(x) := (f(x),0)$. Then, there exists $\hat{g} \in \A(M,Y)$ with $\|\hat{g}\|=1$ such that $\|\hat{g}-\hat{f}\| <\eps$. Assume that $[\hat{g}(p_n)-\hat{g}(q_n)]/d(p_n,q_n)$ converges to $y_0$ for some sequence $\{(p_n,q_n)\}$ of distinct pairs in $M \times M$ and $y_0 \in S_Y$. Define $g_i:=\pi_i \circ \hat{g}$ and $y_i = \pi_i \circ y_0$ for $i=1,2$ where $\pi_i$ is a natural projection from $Y$ to $Y_i$ for $i=1,2$. Let $y^* = (y_1^*,y_2^*) \in S_{Y_1^* \oplus_{a^*} Y_2^*}$ be such that $y^* ((y_1, y_2)) = \|\hat{g}\|$. By \cite[Lemma 1.5]{CDJM}, we can derive that
\begin{equation}
y_1^*(y_1) + y_2^*(y_2) = \|y_1^*\| \|y_1\| + \|y_2^*\| \|y_2\| = 1.
\end{equation}
Note that $\|g_2\| < \eps$ since
\begin{align*}
\|g_2 \| & \leq \sup \left\{ \frac{\|[\hat{g}(p)-\hat{g}(q)] - (f(p)-f(q),0)\|}{d(p,q)} : (p,q) \in M \times M,\, p \neq q \right\} \\
&= \|\hat{g}-\hat{f}\| < \eps.
\end{align*}
This yields that
$$
\|y_1\| \geq \|y_0\| - \|y_2\| \geq \|y_0\| - \|g_2\| > 1-\eps >0.
$$
We now claim that
$$
g(x) := \|y_1^*\| \, g_1(x) + y_2^*(g_2(x)) \frac{y_1}{\|y_1\|}
$$
is the desired map attaining its norm toward $y_0$.

First, note that $[g(p_n)-g(q_n)]/d(p_n,q_n)$ converges to $y_1/\|y_1\|$ since
\begin{align*}
\frac{g(p_n)-g(q_n)}{d(p_n,q_n)} & = \|y_1^*\|\, \frac{g_1(p_n)-g_1(q_n)}{d(p_n,q_n)} + y_2^*\left(\frac{g_2(p_n)-g_2(q_n)}{d(p_n,q_n)}\right) \frac{y_1}{\|y_1\|}
\end{align*}
and that
$$
\frac{y_1}{\|y_1\|} \bigl[\|y_1^*\| \|y_1\| + y_2^*(y_2)\bigr] = \frac{y_1}{\|y_1\|},
$$
hence $\|g\| \geq 1$. To deduce $\|g\| = 1$, observe that
\begin{align*}
\|g\| & \leq \sup \left\{ \|y_1^*\| \frac{\|g_1(p)-g_1(q)\|}{d(p,q)} + \|y_2^*\| \frac{\|g_2(p)-g_2(q)\|}{d(p,q)} : (p,q) \in M \times M,\, p \neq q \right\} \\
& \leq \sup \left\{ \left| \left( \frac{\|g_1(p)-g_1(q)\|}{d(p,q)}, \frac{\|g_2(p)-g_2(q)\|}{d(p,q)} \right) \right|_a : (p,q) \in M \times M,\, p \neq q \right\} \\
&= \| \hat{g} \|.
\end{align*}
It remains to prove that $g$ is close enough to $f$. From the fact that $\|y_1^*\| \|y_1\| + \|y_2^*\| \|y_2\| = 1$, it is easy to see that $\|y_1^*\| > 1 - \eps$. Thus we have
$$
\|g-f\| \leq \|y_2^*\| \| g_2 \| + \bigl\|\|y_1^*\|g_1 - g_1 \bigr\| + \| g_1 -f \| < 3\eps,
$$
which finishes the proof.
\end{proof}

Wih the aid of the concept of metric sum, we are able to prove the stability equivalence on denseness with respect to the domain space in this time.

\begin{theorem}\label{theorem:ell_1-sum}
Let $\{M_i\}_{i \in I}$ be a family of metric spaces, $M = \coprod_{i \in I} M_i$ and $Y$ be a Banach space. Then, the following are equivalent.
\begin{enumerate}
\item[\textup{(a)}] $\A(M_i,Y)$ is dense in $\Lip(M_i,Y)$ for every $i \in I$.
\item[\textup{(b)}] $\A(M,Y)$ is dense in $\Lip(M,Y)$.
\end{enumerate}
\end{theorem}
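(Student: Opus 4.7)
The plan is to leverage the identity $\|h\|=\sup_{i\in I}\|h|_{M_i}\|$, valid for every $h\in\Lip(M,Y)$, which comes from $h(0)=0$ together with the metric sum definition: for $p\in M_j$, $q\in M_k$ with $j\neq k$,
$$
\frac{\|h(p)-h(q)\|}{d(p,q)} \leq \frac{\|h|_{M_j}\|\, d_j(p,0) + \|h|_{M_k}\|\, d_k(0,q)}{d_j(p,0)+d_k(0,q)} \leq \sup_{i\in I}\|h|_{M_i}\|,
$$
while the reverse inequality is immediate. Throughout, restrictions are measured in the Lipschitz norm.

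For the implication $(a)\Rightarrow(b)$, I would fix $f\in\Lip(M,Y)$ with $\|f\|=1$ and $\eps\in(0,1)$, choose $i_0\in I$ with $\|f|_{M_{i_0}}\|>1-\eps$, and use $(a)$ to pick $g_0\in\A(M_{i_0},Y)$ with $\|g_0-f|_{M_{i_0}}\|<\eps$. Set $\lambda:=\|g_0\|>1-2\eps$, and build a map $g\in\Lip(M,Y)$ by taking $g|_{M_{i_0}}:=g_0$ and, for $i\neq i_0$, $g|_{M_i}:=\min\{1,\lambda/\|f|_{M_i}\|\}\, f|_{M_i}$ (with the rescaling read as $0$ if $f|_{M_i}=0$). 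The identity above gives $\|g\|=\lambda$, attained on $M_{i_0}$; the norm-attainment sequence for $g_0$ then witnesses $g\in\A(M,Y)$, and a piecewise comparison on each $M_i$ yields $\|g-f\|<2\eps$.

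For the reverse implication $(b)\Rightarrow(a)$, I would fix $i_0\in I$ and $f\in\Lip(M_{i_0},Y)$ with $\|f\|=1$, extend $f$ by zero to $\tilde f\in\Lip(M,Y)$ (so $\|\tilde f\|=1$ by the identity), apply $(b)$ to obtain $\tilde g\in\A(M,Y)$ with $\|\tilde g-\tilde f\|<\eps$, and set $g:=\tilde g|_{M_{i_0}}$. Then $\|g-f\|<\eps$ and $\|\tilde g|_{M_i}\|<\eps$ for $i\neq i_0$, so the identity forces $\|\tilde g\|=\|g\|$. Let $\{(\tilde p_n,\tilde q_n)\}$ witness $\tilde g\in\A(M,Y)$ toward $y_0$ with $\|y_0\|=\|g\|$. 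Passing to a subsequence, the pairs fall into three classes: both endpoints in $M_{i_0}$, neither endpoint in $M_{i_0}$, or exactly one endpoint in $M_{i_0}$. The identity rules out the ``neither'' class because then $\|y_n\|\leq\eps<\|y_0\|$; the ``both'' class provides the desired sequence directly. In the remaining class, by symmetric relabelling one may assume $\tilde p_n\in M_{i_0}\setminus\{0\}$ and $\tilde q_n\in M_{j_n}\setminus\{0\}$ with $j_n\neq i_0$ (the subcase $\tilde p_n=0$ also forces $\|y_n\|<\eps$ and is discarded). Decompose
$$
y_n = \alpha_n w_n + (1-\alpha_n) v_n,
$$
where $\alpha_n:=d(\tilde p_n,0)/d(\tilde p_n,\tilde q_n)\in[0,1]$, $w_n:=[g(\tilde p_n)-g(0)]/d(\tilde p_n,0)$ and $v_n:=-\tilde g(\tilde q_n)/d(\tilde q_n,0)$. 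Since $\|v_n\|<\eps$, $\|w_n\|\leq\|g\|$ and $\|y_n\|\to\|g\|>\eps$, the estimate $\|y_n\|\leq\alpha_n\|g\|+(1-\alpha_n)\eps$ forces $\alpha_n\to 1$; hence $w_n\to y_0$, and $\{(\tilde p_n,0)\}\subset M_{i_0}\times M_{i_0}$ is the desired sequence showing $g\in\A(M_{i_0},Y)$.

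The main obstacle lies in the last case of $(b)\Rightarrow(a)$: the attainment sequence for $\tilde g$ may straddle different summands. The fix is to replace the ``foreign'' endpoint $\tilde q_n$ by the common basepoint $0\in M_{i_0}$; the closeness $\|\tilde g-\tilde f\|<\eps$ keeps the error from this replacement in $O(\eps)$, and the coefficient $\alpha_n$ must tend to $1$ in order for $\|y_n\|$ to approach the full norm $\|g\|$, so the replacement preserves the limit $y_0$.
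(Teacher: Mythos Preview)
Your proof is correct and follows the same overall strategy as the paper: use the identity $\|h\|=\sup_i\|h|_{M_i}\|$, approximate on one well-chosen summand and damp the remaining ones for $(a)\Rightarrow(b)$, and extend by zero for $(b)\Rightarrow(a)$. The paper phrases everything through the Lipschitz-free space (injections $E_i$ and projections $P_h$ on $\F(M)=[\bigoplus_i\F(M_i)]_{\ell_1}$), whereas you work directly at the level of Lipschitz maps; your version is therefore more elementary and avoids the free-space machinery entirely.

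There is one point where your argument is actually more complete than the paper's. In $(b)\Rightarrow(a)$ the paper asserts that whenever $m_{p_n,q_n}\notin\F(M_h)$ one has $\|T_{g_0}(m_{p_n,q_n})\|\leq\|T_{g_0}-T_fP_h\|<\eps$; as written this only follows when $P_h(m_{p_n,q_n})=0$, i.e.\ when \emph{both} endpoints lie outside $M_h$, and it does not immediately cover the mixed case $p_n\in M_h$, $q_n\notin M_h$. Your convex-combination decomposition $y_n=\alpha_n w_n+(1-\alpha_n)v_n$ with $\alpha_n\to 1$ handles exactly this situation and shows that the ``foreign'' endpoint can be replaced by the basepoint. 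One small omission on your side: to conclude $\|\tilde g\|=\|g\|$ and to rule out the ``neither'' class you need $1-\eps>\eps$, so you should take $\eps<1/2$ rather than just $\eps<1$.
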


\begin{proof}
(a)$\Rightarrow$(b). Let $\eps>0$ and $f \in \Lip(M,Y)$ be given. if we write $E_i : \F(M_i) \to \F(M)$ to be a natural injection, we have that
$$
\|T_f\| = \sup_{i \in I} \|T_f E_i\|.
$$
Choose $h \in I$ so that $\|T_f E_h\| > \|f\|-\eps$. By assumption, there exists $g_0 \in \A(M_h,Y)$ such that $\|g_0\| = \|T_f E_h\|$ by rescaling and $\|T_{g_0} - T_f E_h\|<\eps$. Assume that $[g_0(p_n)-g_0(q_n)]/d(p_n,q_n)$ converges to $y_0$ for some sequence $\{(p_n,q_n)\}$ of distinct pairs in $M_h \times M_h$ and $y_0 \in Y$ with $\|y_0\|=\|g_0\|$. Let us define $T_g \in \Lin(\F(M),Y)$ by
\begin{displaymath}
T_g E_i=\left\{\begin{array}{@{}cl}
\displaystyle \phantom{.} T_{g_0} & \text{if } i=h \\
\displaystyle \phantom{.} (1-\eps) T_f E_i & \text{if } i \neq h.
\end{array} \right.
\end{displaymath}
Then, $\|g\|=\|T_g\|=\|T_{g_0}\|=\|g_0\|$ and
$$
\|g-f\| = \sup_{i \in I} \|(T_g-T_f)E_i\| \leq \eps.
$$
Finally, if we consider the sequence $\{E_h m_{p_n,q_n}\} \subset \operatorname{Mol}(M)$, then $T_g (E_h m_{p_n,q_n})$ converges to  $y_0$ and $\|y_0\|=\|g\|$, which concludes that $g \in \A(M,Y)$.

\ \

(b)$\Rightarrow$(a). Let $0<\eps<1$ and $f \in \Lip(M_h,Y)$ be given with $\| f \| = 1$ for some $h \in I$. If we consider $T_f P_h \in \Lin(\F(M),Y)$ where $P_h: \F(M) \to \F(M_h)$ is the natural projection, then we have that $\|T_f P_h\| = \|f\|$. By assumption, there exists $g_0 \in \A(M,Y)$ such that $\|g_0\|=\|T_f P_h\|$ and $\|T_{g_0} - T_f P_h\| < \eps$. Assume that $[g_0(p_n)-g_0(q_n)]/d(p_n,q_n)$ converges to $y_0$ and $\|y_0\|=\|g_0\|$ for some sequence $\{(p_n,q_n)\}$ of distinct pairs in $M \times M$ and $y_0 \in Y$. Note that if $m_{p_n, q_n} \in \F (M) \setminus \F (M_h)$ for some $n \in \mathbb{N}$, then 
$$
\left\|\frac{g_0(p_{n})-g_0(q_{n})}{d(p_{n},q_{n})}\right\| \leq \|T_{g_0} - T_f P_h\| < \eps. 
$$
This implies that we can find a subsequence $\{(p_{n_j},q_{n_j})\} \subset M \times M$ satisfying that $m_{p_{n_j}, q_{n_j}} \in \F (M_h)$ and $T_{g_0} (E_h m_{p_{n_j},q_{n_j}})$ converges to $y_0$. Let $g \in \Lip (M_h, Y)$ be such that $T_g =T_{g_0} E_h \in \Lin(\F (M_{h}),Y)$. Then,
$$
\|g-f\|=\|T_{g_0}E_h - T_f P_h E_h\|<\eps
$$
and $T_g (m_{p_{n_j},q_{n_j}}) = T_{g_0} (E_h m_{p_{n_j},q_{n_j}})$ converges to $y_0$.
\end{proof}

One may find new pairs of spaces which fail the denseness by applying Proposition \ref{prop:absolute-sum} or Theorem \ref{theorem:ell_1-sum}. Particularly, Theorem \ref{theorem:ell_1-sum} allows us to produce a non-Banach space example of metric space such that the denseness of norm attaining Lipschitz maps toward vectors fails.

\begin{cor}
There exist a metric space $M$ which is not a Banach space and a Banach space $Y$ such that $\A(M,Y)$ is not dense in $\Lip(M,Y)$.
\end{cor}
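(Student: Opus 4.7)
The plan is to combine the metric-sum stability result (Theorem \ref{theorem:ell_1-sum}) with the denseness counterexample on Banach space domains extending Theorem \ref{theorem:LipA-dense-fail}, namely \cite[Example 3.6]{CCM}. That counterexample furnishes a pair of Banach spaces $X,Y$ for which $\A(X,Y)$ is not dense in $\Lip(X,Y)$.

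First I would pick any two-point pointed metric space $M_0 = \{0,p\}$ with $d(0,p)=1$ and form the metric sum $M := X \coprod M_0$. Since $M_0$ is uniformly discrete (in fact finite), $\A(M_0,Y)$ coincides with $\Lip(M_0,Y)$, so it is trivially dense. Applying Theorem \ref{theorem:ell_1-sum} to the family $\{X,M_0\}$ then shows that denseness of $\A(M,Y)$ in $\Lip(M,Y)$ would force denseness of $\A(X,Y)$ in $\Lip(X,Y)$; the latter fails by the choice of $(X,Y)$, so $\A(M,Y)$ is not dense in $\Lip(M,Y)$.

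It remains to verify that $M$ is not isometric to any Banach space. In every Banach space each pair $(a,b)$ admits a metric midpoint, namely $(a+b)/2$, so it suffices to find two points in $M$ with no midpoint. I would test the pair $0,p$, whose distance is $1$: any midpoint $c \in M$ must satisfy $d(0,c)=d(p,c)=1/2$. If $c \in X$, the definition of the metric sum gives $d(p,c) = d(0,c) + d(0,p) = 1/2 + 1 = 3/2$, contradicting $d(p,c)=1/2$. If instead $c \in M_0 = \{0,p\}$, then $d(0,c) \in \{0,1\}$, again ruling out $c$. Hence $M$ admits no midpoint between $0$ and $p$ and therefore cannot be isometric to any Banach space.

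The only mild obstacle is the last step: one must check that $M$ is not merely ``not a Banach space on the nose'' (which is obvious, since $M$ has no vector space structure) but genuinely fails to be isometric to one. The midpoint obstruction handles this cleanly, and all the substantive work is packaged inside Theorem \ref{theorem:ell_1-sum} and the extended Godefroy counterexample \cite[Example 3.6]{CCM}.
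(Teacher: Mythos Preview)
Your proof is correct and follows essentially the same route as the paper: use the known Banach-space counterexample $(X,Y)$ together with Theorem~\ref{theorem:ell_1-sum} applied to a metric sum $M = X \coprod M_0$. The paper is terser---it simply says ``choosing $M_2$ to be a metric space which is not a Banach space gives the desired space''---whereas you go further and verify via the midpoint obstruction that $M$ is not even \emph{isometric} to a Banach space, which is a welcome strengthening.
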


\begin{proof}
By Theorem \ref{theorem:ell_1-sum}, for Banach spaces $X$ and $Y$ such that $\A(X,Y)$ is not dense in $\Lip(X,Y)$, the space $M= \coprod_{i=1}^2 M_i$ with $M_1 = X$ satisfies that $\A(M,Y)$ is not dense in $\Lip(M,Y)$. Choosing $M_2$ to be a metric space which is not a Banach space gives the desired space.
\end{proof}

\ \



\begin{thebibliography}{99}




\bibitem{CCGMR} \textsc{B.~Cascales, R.~Chiclana, L.~C.~Garc\'ia-Lirola, M.~Mart\'in, and A.~Rueda Zoca}, On strongly norm attaining Lipschitz maps, \emph{J. Funct. Anal.} \textbf{277} (2019), 1677--1717.

\bibitem{CGMR} \textsc{R.~Chiclana, L.~C.~Garc\'ia-Lirola, M.~Mart\'in, and A.~Rueda Zoca}, Examples and applications of the density of strongly norm attaining Lipschitz maps, \emph{Rev. Mat. Ibeoram.} \textbf{37} (2021), 1917--1951.

\bibitem{CM} \textsc{R.~Chiclana and M.~Mart\'in}, Some stability properties for the Bishop--Phelps--Bollob\'as property for Lipschitz maps, \emph{Studia Math.} (2022), \href{https://doi.org/10.4064/sm200427-7-4}{https://doi.org/10.4064/sm200427-7-4}

\bibitem{CCJM} \textsc{G.~Choi, Y.~S.~Choi, M.~Jung, and M.~Mart\'in}, On quasi norm attaining operators between Banach spaces, \emph{preprint} (2020), available at arxiv.org with reference: \href{https://arxiv.org/abs/2004.11025}{2004.11025}

\bibitem{CCM} \textsc{G.~Choi, Y.~S.~Choi, and M.~Mart\'in}, Emerging notions of norm attainment for Lipschitz maps between Banach spaces, \emph{J. Math. Anal. Appl.} \textbf{483} (2020), 123600.

\bibitem{CDJM} \textsc{Y.~S.~Choi, S.~Dantas, M.~Jung, and M.~Mart\'in}, The Bishop-Phelps-Bollob\'as property and absolute sums, \emph{Mediterr. J. Math.} \textbf{16} (2019), 73.


\bibitem{G1} \textsc{G.~Godefroy}, A survey on Lipschitz-free Banach spaces, \emph{Comment. Math.} \textbf{55} (2015) 89--118.

\bibitem{G2} \textsc{G.~Godefroy}, On norm attaining Lipschitz maps between Banach spaces, \emph{Pure Appl. Funct. Anal.} \textbf{1} (2016), 39--46.

\bibitem{KMS} \textsc{V.~Kadets, M.~Mart\'in, and M.~Soloviova}, Norm attaining Lipschitz functionals, \emph{Banach. J. Math. Anal.} \textbf{10} (2016), 621--627.

\bibitem{K} \textsc{P.~L.~Kaufmann}, Products of Lipschitz-free spaces and applications, \emph{old preprint version} (2014), available at arxiv.org with reference: \href{https://arxiv.org/abs/1403.6605}{1403.6605}

\bibitem{LLT} \textsc{B.~L.~Lin, P.~K.~Lin, and S.~L.~Troyanski}, Some geometric and topological properties of the unit sphere in a Banach space, \emph{Math. Ann.} \textbf{274} (1986), 613--616.

\end{thebibliography}
\end{document}